\newtheorem{theorem}{Theorem}
\newtheorem{conjecture}[theorem]{Conjecture}
\newcommand\numberthis{\addtocounter{equation}{1}\tag{\theequation}}
\begin{document}
%
\title{Efficient 3-D Placement of an Aerial Base Station in Next Generation Cellular Networks}



%
\author{\IEEEauthorblockN{R. Irem Bor Yaliniz,
Amr El-Keyi, and
Halim Yanikomeroglu}
\IEEEauthorblockA{Department of Systems and Computer Engineering, Carleton University, Ottawa, ON, Canada}}


\maketitle
\makeatletter{\renewcommand*{\@makefnmark}{}
\footnotetext{\hrule \vspace{0.05in}
This work was supported in part by Huawei Canada Co., Ltd., in part by the Ontario Ministry of Economic Development and Innovations ORF-RE (Ontario Research Fund—Research
Excellence) program, and in part by the Natural Sciences and Engineering Research Council of Canada (NSERC) Discovery Grants program.}\makeatother}

\begin{abstract}
Agility and resilience requirements of future cellular networks may not be fully satisfied by terrestrial base stations in cases of unexpected or temporary events. A promising solution is assisting the cellular network via low-altitude unmanned aerial vehicles equipped with base stations, i.e., \textit{drone-cells}. Although drone-cells provide a quick deployment opportunity as aerial base stations, efficient placement becomes one of the key issues. In addition to mobility of the drone-cells in the vertical dimension as well as the horizontal dimension, the differences between the air-to-ground and terrestrial channels cause the placement of the drone-cells to diverge from placement of terrestrial base stations. In this paper, we first highlight the properties of the drone-cell placement problem, and formulate it as a 3-D placement problem with the objective of maximizing the revenue of the network. After some mathematical manipulations, we formulate an equivalent quadratically-constrained mixed integer non-linear optimization problem and propose a computationally efficient numerical solution for this problem. We verify our analytical derivations with numerical simulations and enrich them with discussions which could serve as guidelines for researchers, mobile network operators, and policy makers.
\end{abstract}


%
\IEEEpeerreviewmaketitle
\section{Introduction}
Next generation cellular networks have high reliability and availability demands~\cite{osseiran_scenarios_2014}. Be it a natural disaster, extreme densities of users in an area, or providing connectivity in rural areas, the cellular network needs to meet certain quality of service (QoS) requirements. However, these situations are either unexpected, or temporary. As a result, it is not feasible to invest in an infrastructure that will provide revenue for a relatively short time. A potential solution to these problems is assisting the cellular network via low-altitude unmanned aerial vehicles (UAV) that can serve as aerial base stations with a quick deployment opportunity, i.e. \textit{drone-cells}. However, one of the biggest challenges is to determine the optimal placement of the drone-cell so that the network can benefit the most.

Although there has been significant amount of work on using UAVs in surveillance and reconnaissance networks~\cite{kandeepan_aerial-terrestrial_2014, zhan_wireless_2011}, using UAVs to assist future cellular networks is still at its infancy. In~\cite{rohde_ad_2013} and~\cite{li_drone-assisted_2015}, the positioning of aerial relays is discussed. However, both works have a fixed altitude assumption and place the UAV on a line segment without considering the relation between the coverage area and the altitude of the UAV. Moreover, the effects of urban environment on the performance of communications is not considered. Both issues are addressed in~\cite{al-hourani_optimal_2014}, which provides fundamental results on the optimal altitude of a drone-cell, and the channel model to be utilized in urban environments. Accordingly, the authors of~\cite{drone} investigate the coverage of two drone-cells positioned at a fixed altitude, and interfering with each other over a given area. The effects of interference is further studied in the presence of underlaid device-to-device (D2D) communications in~\cite{mozaffari_unmanned_2015}. In this work, there is no other base station except the drone-cell, which comes with the assumption that the area to be covered is known. 
A similar approach in~\cite{merwaday_uav_2015} shows the improvement in the coverage by assisting the network with drone-cells at a certain altitude, in case of failure of Evolved Node Bs (eNBs). However, these studies neither cover all potential scenarios, nor show the optimal results for all possible selections of altitude, location, and coverage area. For instance, in case of a congested cell, neither the size of the area to be covered, nor the location of that area within the cell is known. These parameters need to be determined according to the target revenue, and the QoS requirements. Moreover, determining the altitude of the UAV is intertwined in this problem because of the characteristics of the channel between the UAV and the terrestrial users, namely the air-to-ground channel.\\
\indent The work presented so far either considers the altitude, which is 1-D, or placement in the horizontal space, which is 2-D. To the best of our knowledge, this paper is the first to propose an efficient 3-D placement algorithm for drone-cells in cellular networks by jointly determining the area to be covered, and the altitude of the drone-cell. We begin by discussing the characteristics of the air-to-ground channel. The discussion on how the placement of a drone-cell is different than the placement of terrestrial base stations leads us to the 3-D placement formulation. Our objective in this formulation is to maximize the revenue of the network, which is proportional to the number of users covered by the drone-cell.\\
\indent Due to the complexity of the channel model, the solution of the 3-D placement problem formulation cannot be directly found. In order to solve this problem, we introduce a new variable relating the altitude of the drone-cell to the radius of its coverage area. Although there is not an analytical expression for the optimal value of this variable, it can be efficiently obtained by using one dimensional bisection search. Afterwards, the 3-D placement problem reduces to a mixed-integer non-linear problem (MINLP), which can be solved by using the interior point optimizer of the MOSEK solver.\\
\indent The rest of the paper is organized as follows. We present the system model, and discuss the channel model in detail in Section~\ref{sec:system_model}. Next, the description of the 3-D placement problem, and the solution method are presented in Section~\ref{sec:place}. Numerical results validating our derivations are presented in Section~\ref{sec:num}. Finally, the paper is concluded in Section~\ref{sec:conc}.

%
%

\section{System Model} \label{sec:system_model}
Consider a macrocell where the location of each user $i$ is known and represented by $(x_i, y_i)$. We assume that for a user to be served, the QoS measured by the received signal-to-noise ratio (SNR) should be above a certain threshold. In case of an extreme event, such as congestion within the cell, or malfunction of the infrastructure, the terrestrial base station may become unable to serve all users. Hence, it will be assisted by a drone-cell with fixed transmission power. We consider a low-altitude quasi-stationary UAV for this purpose, and would like to determine the altitude $h$, and location, $(x_D, y_D)$, providing the maximum revenue. Assuming a fixed QoS for all users, the maximum revenue can be obtained by offloading the maximum amount of users to the drone-cell. The placement of the drone-cell affects both the number of users enclosed by the its coverage region, and the quality of the air-to-ground links. Utilization of air-to-ground links is a characteristic of aerial base stations. There has been several studies on air-to-ground channel models, which we discuss next.
%
%

The air-to-ground channel differs from the terrestrial channel due to its higher chance of line-of-sight (LoS) connectivity. As a result, Rician~\cite{kandeepan_aerial-terrestrial_2014}, large scale Rayleigh~\cite{zhan_wireless_2011}, and free space fading~\cite{rohde_ad_2013} models are widely utilized in the literature for air-to-ground channels. However, none of them considers the effect of the environment on the occurrence of LoS. One of the most complete models on the effects of building blockage on radio propagation is proposed by ITU in~\cite{itu_prop}. With the help of the results in~\cite{itu_prop}, a channel model for air-to-ground communication in urban environments is presented in~\cite{al-hourani_optimal_2014} and~\cite{al-hourani_modeling_2014}, and adopted here.

The probability of having LoS for user $i$ depends on the altitude of the drone-cell, $h$, and the horizontal distance between the drone-cell and $i^{th}$ user, which is 
$r_i =  \sqrt{(x_D - x_i)^2 + (y_D - y_i)^2 }$ for the $i^{th}$ user located at $(x_i, y_i)$ and the drone-cell at $(x_D, y_D)$. The LoS probability is given by~\cite{al-hourani_optimal_2014}%
\begin{equation}
P(h,r_i) = \frac{1}{1+a\exp\left(-b\left(\arctan\left(\frac{h}{r_i}\right)-a\right)\right)},
\label{eq:plos}
\end{equation}%
where $a$ and $b$ are constant values that depend on the environment. In this setting, the altitude of the user, and the antenna heights of both the users and the drone-cell are neglected. Then the pathloss expression becomes~\cite{al-hourani_optimal_2014}
\begin{multline}
L(h,r_i) = 20\log\left(\frac{4\pi f_c}{c}\right) + 20\log\left(\sqrt{h^2 + r_i^2 }\right) + \\
P(h,r_i)\eta_{\text{LoS}} + (1-P(h,r_i))\eta_{\text{NLoS}},
\label{eq:pl}
\end{multline}
where $f_c$ is the carrier frequency (Hz), $c$ is the speed of light (m/s), $\eta_{LoS}$ and $\eta_{NLoS}$ (in dB) are respectively the losses corresponding to the LoS and non-LoS connections depending on the environment. Equivalently,~\eqref{eq:pl} can be written as%
\begin{equation}
L(h, r_i) = 20\log\left(\sqrt{h^2 + r_i^2 }\right) + AP(h,r_i)+ B,
\label{eq:pl2}
\end{equation} 
where $A$ and $B$ are constants such that $A = \eta_{\text{LoS}} - \eta_{\text{NLoS}}$, and $B = 20\log(\frac{4\pi f_c}{c})+ \eta_{\text{NLoS}}$.
Note that the pathloss model presented here is a function of both $h$ and $r_i$. In other words, the pathloss of the air-to-ground link depends on the altitude in the vertical dimension, and the distance in the horizontal dimension. Thus, we have a 3-D placement problem.

\section{Efficient 3-D Placement of a UAV}\label{sec:place}
Placement of a drone-cell is different than terrestrial cell placement because of the following reasons:
\begin{enumerate}
\item In addition to choosing the drone-cell's location in the horizontal space $(x_D, y_D)$, we need to determine its altitude, $h$, as well.
\item The coverage area of a terrestrial cell is known a priori. However, the coverage area of a drone-cell depends on its altitude, and is unknown before solving the placement problem.
\item The mobility of the drone-cell allows it to move wherever the demand is, rather than terrestrial cells waiting for the demand to come towards them. As a result, the coverage region providing the maximum revenue to the network should be found.
\end{enumerate}
The first item indicates that the placement of the drone-cell is a 3-D problem. In addition, the last two items, which are determining the size of the coverage area, and identifying the location of the coverage area must be considered jointly.  

\begin{figure}[!t]
\centering
\includegraphics[width=0.45\textwidth]{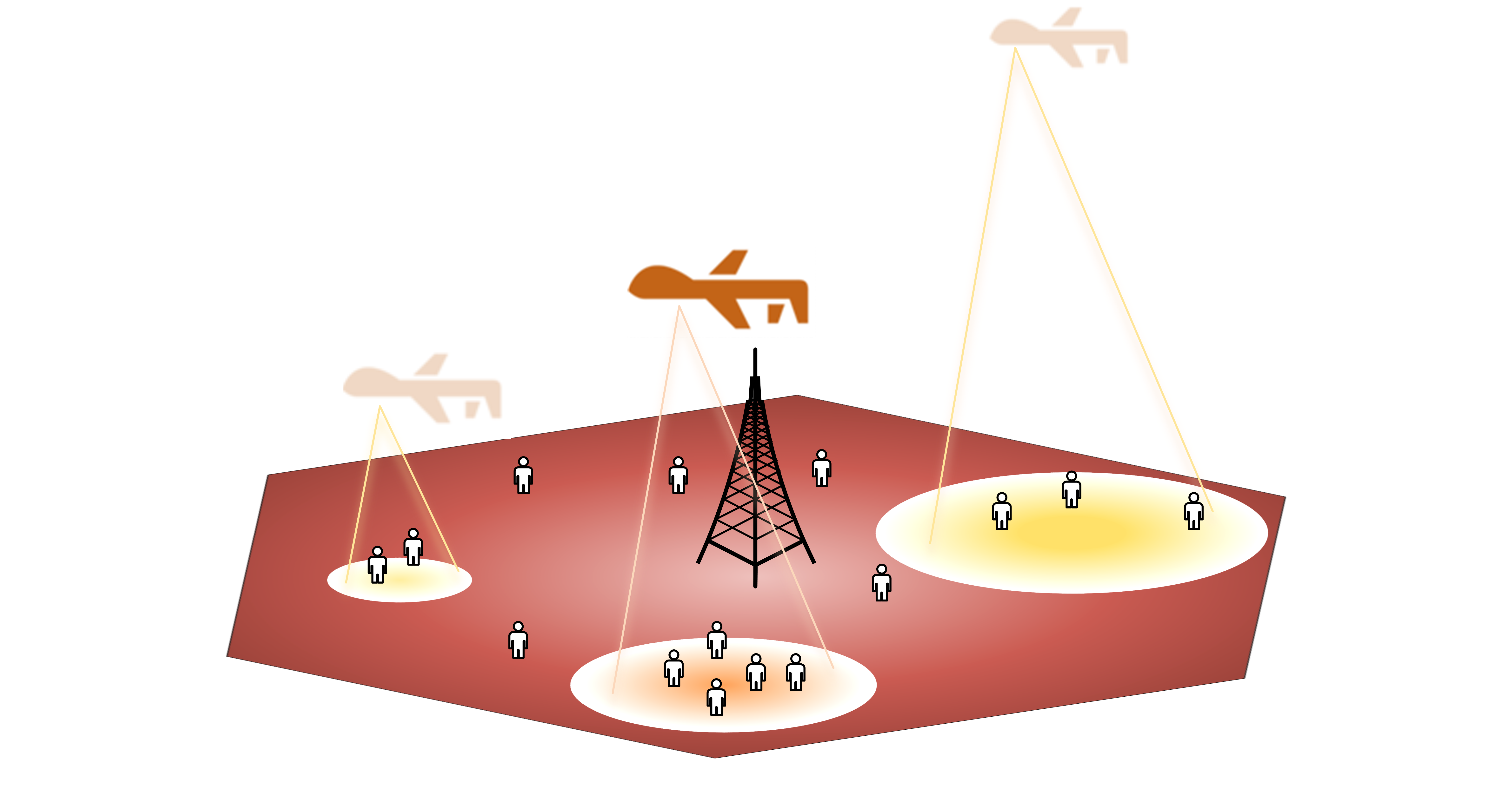}
\caption{A possible scenario showing the users that are not covered by the eNB. Three potential placements of a drone-cell is highlighted.}
\label{fig:potential}
\end{figure}
A possible placement problem is shown in Fig.~\ref{fig:potential}. Assume that the macrocell is congested, where only the users that cannot be served by the eNB are shown. Three potential areas to be covered by deploying a drone-cell at different altitudes and locations are highlighted. Note that in each case, as well as the altitude of the drone-cell, the size of the area to be covered is changing, which in turn, affects the number of users served by the drone-cell. In this section, we will formulate and solve the 3-D placement problem efficiently to serve the maximum number of users with the minimum required area.
%
%

We assume that a user is in the coverage region of the drone-cell if the air-to-ground link satisfies its QoS requirement.  For a given transmission power of the drone-cell, let $\gamma$ represent the pathloss corresponding to the QoS requirement. Hence, user $i$ is served by the drone-cell, if $L(h, r_i) \leq \gamma$. Using~\eqref{eq:pl2}, we can re-write this condition as
\begin{equation}
h^2 + r_i^2 \leq 10^\frac{\gamma - (AP(h,r_i) + B)}{10}.
\label{eq:new_gamma}
\end{equation}
Let $u_i \in \{0,1\}$ denote a binary variable that indicates whether user $i$ is served by the drone-cell, or not. Using the variable $u_i$, which is equal to 1, only if the user $i$ is served by the drone-cell, and equal to 0 otherwise, the following constraint,
\begin{equation}
u_i(h^2 + r_i^2) \leq 10^\frac{\gamma - (AP(h,r_i) + B)}{10},
\label{eq:new_gamma}
\end{equation}
determines whether user $i$ is covered, or not. This constraint can be further manipulated to
\begin{equation}
\begin{aligned}
h^2 + r_i^2\leq 10^\frac{\gamma - (AP(h,r_i) + B)}{10} + M_1(1-u_i),
\label{eq:either}
\end{aligned}
\end{equation}
where $M_1$ is a constant that is slightly larger than the maximum possible value of the distance between a user and the drone-cell. Observe that when $u_{i} = 1$, \eqref{eq:either} is equivalent to \eqref{eq:new_gamma}. If $u_i = 0$, since $M_1$ is large enough, this constraint is released. Now, we can continue by determining the objective function.

Assuming a fixed QoS for all users, the best region to be served by the drone-cell is identified with the maximum number of users covered. By using~\eqref{eq:either}, the placement problem for a set of users, $\mathbb{U}$ that are not covered by the macrocell can be written as
\begin{align*}\label{eq:opt}
&\underset{x_D,y_D,h,\{u_i\}}{\text{maximize}}
\qquad \sum_{i \in\mathbb{U}}u_i\nonumber\\
& \text{subject to}\nonumber\\
& \quad h^2 + r_i^2 \leq 10^\frac{\gamma - (AP(h,r_i) + B)}{10} + M_1(1-u_i),
\ \forall i = 1,...,|\mathbb{U}|,\nonumber\\
& \quad x_{l} \leq x_D \leq x_{u}, \\
& \quad y_{l} \leq y_D \leq y_{u}, \numberthis \\
& \quad h_{l} \leq h \leq h_{u},\nonumber\\
& \quad u_{i} \in \{0,1\},
\qquad \qquad \qquad \qquad \qquad \qquad \quad \forall i = 1,...,|\mathbb{U}|,
\end{align*}
where $|\cdot|$ represents the cardinality of a set, subscripts $(\cdot)_l$ and $(\cdot)_u$ denote respectively the minimum and maximum allowed values for $x_D$, $y_D$, and $h$ of the drone-cell. Note that there are quadratic, exponential and binary terms in this problem, which makes it a MINLP. %
%
We will show that this problem can be solved efficiently by using a combination of the interior-point optimizer of MOSEK solver and bisection search.

Observe that if $P(h,r)$ was a constant, then this optimization problem would be quadratically constrained MINLP. Let us denote the radius of the area to be covered by $R$ and introduce the variable $\alpha$ as
\begin{equation}
\alpha = \frac{h}{R}.
\label{eq:alpha}
\end{equation}
Then, if user $i$ is covered, $R \geq r_{i}$ must be satisfied, i.e., the served user must be located within the coverage region. This conditional expression is similar to~\eqref{eq:new_gamma}, and consequently  is equivalent to
\begin{equation}
R \geq r_i - M_2(1-u_i),
\end{equation}
as in~\eqref{eq:either}, where $M_2$ is a constant value which is slightly larger than the maximum possible value of $R$. Also, the first constraint in~\eqref{eq:opt} becomes
\begin{equation}
R^2 \leq \Gamma(\alpha),
\label{eq:RandGamma}
\end{equation}
where
\begin{equation}
\Gamma(\alpha) = \frac{10^\frac{\gamma - (AP(\alpha) + B)}{10}}{(1+\alpha^2)},
\label{eq:Gamma}
\end{equation}
which enables us to omit the variable $h$ from~\eqref{eq:opt}, since $P(\alpha)$ is
\begin{equation}
P(\alpha) = \frac{1}{1+a\exp\left(-b\left(\arctan\left(\alpha\right)-a\right)\right)}
\end{equation}
by~\eqref{eq:plos}. Thus,~\eqref{eq:opt} becomes

\begin{align*}
\label{eq:fal}
&\underset{x_D,y_D,\{u_i\},R, \alpha}{\text{maximize}}
\quad \sum_{i \in \mathbb{U}}u_i\\
& \text{subject to}
\quad R^2 \leq \Gamma(\alpha)+ M_1(1-u_i),
\quad \forall i = 1,...,|\mathbb{U}|,\\
& \qquad \qquad \quad R \geq r_i - M_2(1-u_i),
\qquad \ \ \forall i = 1,...,|\mathbb{U}|,\\
& \qquad \qquad \quad x_{l} \leq x_D \leq x_{u},\\
& \qquad \qquad \quad y_{l} \leq y_D \leq y_{u},\numberthis \\
& \qquad \qquad \quad R \geq 0,\\
& \qquad \qquad \quad u_i \in \{0,1\},
\qquad \qquad \qquad \quad \  \forall i = 1,...,|\mathbb{U}|.
\end{align*}

Note that if $\alpha$ and $R$ are known, $h$ can be evaluated by using~\eqref{eq:alpha}. Since the variable $\alpha$ appears only in the right-hand-side of the first constraint of~\eqref{eq:fal}, the optimum value of $\alpha$, which maximizes $\Gamma(\alpha)$, maximizes the size of the feasible set of~\eqref{eq:fal}. Next, we numerically show that $\Gamma(\alpha)$ has only one local maxima. Hence, there exists a certain value, $\alpha^*$ that maximizes $\Gamma(\alpha)$.
\begin{conjecture}
For any QoS requirement, $\gamma$, and for any operating frequency, $f_c$, if a local maxima exists in the function $\Gamma(\alpha)$ defined in~\eqref{eq:Gamma}, then it is the only local maxima of the function for $\alpha \in [0, \infty ]$ for the propagation environments whose parameters are listed in Table~\ref{tab:env}.
\label{lemma}
\end{conjecture}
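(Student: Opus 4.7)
The plan is to pass to $\ln\Gamma(\alpha)$, which turns the exponential in $\Gamma$ into a linear term in $P(\alpha)$ and yields
\begin{equation*}
\ln\Gamma(\alpha) = \tfrac{\ln 10}{10}\bigl(\gamma - B - A\,P(\alpha)\bigr) - \ln(1+\alpha^2).
\end{equation*}
A direct differentiation gives $P'(\alpha) = ab\,e^{-b(\arctan\alpha - a)} / \bigl[(1+\alpha^2)(1+ae^{-b(\arctan\alpha - a)})^2\bigr] > 0$ on $[0,\infty)$, and for every environment in Table~\ref{tab:env} one has $A = \eta_{\mathrm{LoS}} - \eta_{\mathrm{NLoS}} < 0$, so the stationarity condition reduces to locating zeros of
\begin{equation*}
F(\alpha) \;:=\; -\tfrac{A\ln 10}{10}\,P'(\alpha) \;-\; \frac{2\alpha}{1+\alpha^2}.
\end{equation*}
At the endpoints I would check $F(0) = -\tfrac{A\ln 10}{10}P'(0) > 0$ and $F(\alpha)\to 0^-$ as $\alpha\to\infty$ (the first term decays like $\alpha^{-2}$, the second like $-2/\alpha$), so $F$ already has at least one interior zero; uniqueness of the local maximum is then equivalent to $F$ having exactly one sign change from $+$ to $-$.

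The cleanest analytical route is to try to establish strict log-concavity of $\Gamma$, i.e.\ $F'(\alpha) < 0$ on $(0,\infty)$. Expanding, $F'(\alpha) = -\tfrac{A\ln 10}{10}\,P''(\alpha) - \tfrac{2(1-\alpha^2)}{(1+\alpha^2)^2}$, and I would split the analysis at $\alpha=1$. On $[1,\infty)$ the rational term is nonnegative, so it suffices to show that the sigmoid $P$ has already passed its inflection, i.e.\ $P''(\alpha)\le 0$ there; this is a one-variable calculus check on a composition of $\arctan$ with a logistic. On $[0,1)$ both contributions can have the same sign, so a more delicate combined bound is required, most plausibly by comparing $P''(\alpha)$ against a multiple of $\alpha/(1+\alpha^2)^2$ obtained from the explicit expression for $P'$.

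The main obstacle is that $P''(\alpha)$ genuinely changes sign, and both the inflection location and the magnitude of $P''$ relative to the rational term depend intricately on $a$, $b$, $A$, and implicitly on $\gamma$ through the operating regime, so no clean inequality of the form $F' < 0$ can hold uniformly in parameter space. This is presumably why the statement appears as a conjecture rather than a lemma. My fallback, matching the paper's declared strategy, is computer-assisted verification: for each environment in Table~\ref{tab:env}, evaluate $F(\alpha)$ on a sufficiently fine grid covering $[0,\alpha_{\max}]$ with $\alpha_{\max}$ chosen so that $P(\alpha) > 1-\epsilon$, count sign changes of $F$, and combine this with an a priori Lipschitz bound on $F$ to exclude undetected oscillations between grid points, yielding a rigorous certificate that $\Gamma$ admits exactly one local maximum for the propagation parameters of interest.
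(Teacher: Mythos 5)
Your proposal is, at its core, the same route the paper itself takes, and it is worth noting that the paper does not actually prove the statement: its ``proof'' is an Observation consisting of (i) the remark that $\gamma$ and $B$ (hence $f_c$) enter $\Gamma(\alpha)$ in~\eqref{eq:Gamma} only as a multiplicative scaling, so the location of any extremum is independent of them, and (ii) numerical plots of $\Gamma(\alpha)$ for the four environments of Table~\ref{tab:env}, from which uniqueness of the maximum is read off. Your log-derivative $F=(\ln\Gamma)'$ encodes (i) automatically, since $\gamma$ and $B$ drop out of $F$ entirely (so your later remark that the comparison depends ``implicitly on $\gamma$'' contradicts your own formula and should be deleted), and your endpoint analysis $F(0)>0$, $F(\alpha)<0$ for large $\alpha$ is a genuine improvement, since it proves that an interior maximum always exists rather than assuming it. The attempted strict log-concavity argument is the one genuinely different ingredient, and you correctly diagnose why it does not close: $P''$ changes sign, and log-concavity is sufficient but not necessary for unimodality, so it may fail even for the listed environments. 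Your fallback---a certified sign-change count of $F$ on a grid with a Lipschitz bound---would, if carried out, be strictly stronger than the paper's plot-based observation and would settle the claim for the four environments and all $\gamma$, $f_c$. Two details need fixing for that certificate to be airtight: first, the elevation angle in this model is in degrees (the paper's $\Delta$ contains $\frac{180}{\pi}\arctan(\alpha)$, and the values of $a$ in Table~\ref{tab:env} are degree-valued), so your expression for $P'(\alpha)$ must carry the factor $\frac{180}{\pi}$; second, truncating at an $\alpha_{\max}$ where $P(\alpha)>1-\epsilon$ does not by itself control the tail---you need an explicit bound such as $-\frac{A\ln 10}{10}P'(\alpha)\le -\frac{A\ln 10}{10}\cdot\frac{180\,b}{4\pi}\cdot\frac{1}{1+\alpha^2}<\frac{2\alpha}{1+\alpha^2}$, valid for all $\alpha$ beyond a computable threshold, which guarantees $F<0$ on the tail and confines all possible sign changes to the finite grid you check.
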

\begin{proof}[Observation]
Observe that $\gamma$ and $B$ in~\eqref{eq:Gamma} only scale the value of~$\Gamma(\alpha)$. Since $B = 20\log(\frac{4\pi f_c}{c})+ \eta_{\text{NLoS}}$, it also follows that the behaviour of $\Gamma(\alpha)$ does not depend on $f_c$. In other words, the maximum point, $\alpha^*$, does not change for different $\gamma$ and $f_c$, but the value of $\Gamma(\alpha^*)$ is scaled. 

The behaviour of $\Gamma(\alpha)$ is only determined by the environment parameters in $A$ and $P(\alpha)$. By numerically plotting~\eqref{eq:Gamma} in  Fig.~\ref{fig:All_env}, we show that for all environments there exists only one maximum value, which occurs at $\alpha^*$. Moreover, it is observed that the local maximas marked in Fig.~\ref{fig:All_env} are the only maximas for all environments.
\begin{figure}[!t]
\centering
\includegraphics[width=0.45\textwidth]{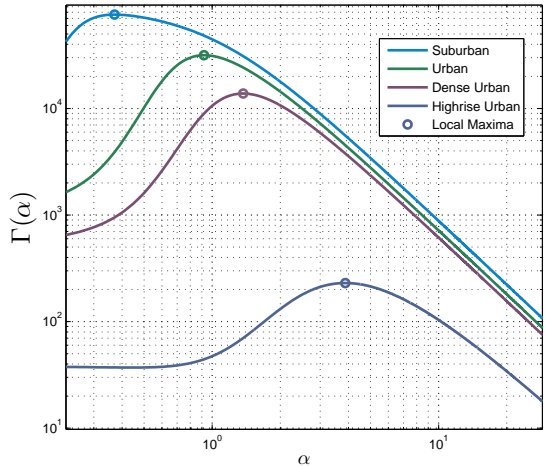}
\caption{$\Gamma(\alpha)$ versus $\alpha$ for various environments.}
\label{fig:All_env}
\end{figure}
\end{proof}
We can use the derivative of $\Gamma(\alpha)$ to find $\alpha^*$, which is the root of $\frac{d\Gamma(\alpha)}{d\alpha}$, that can be calculated as
\begin{equation}
\frac{d\Gamma(\alpha)}{d\alpha} = -\frac{10^{\Lambda}}{\Omega\Delta^2}\left(2\alpha\Delta^2  + AbK \left(\Delta -1\right)\right),
\end{equation}
where $K = 18\frac{\log(10)}{10\log(\mathrm{e})}$, and $\Delta$, $\Lambda$ and $\Omega$ are
%
%
\begin{equation}
\Delta = \left( a \exp \left( b \left( a-\frac{180}{\pi}\arctan(\alpha) \right) \right) + 1 \right),\\
\end{equation}
%
%
\begin{equation}
\Lambda = \frac{1}{10} \left(\gamma- B - \frac{A}{\Delta}\right),
\end{equation}
%
%
\begin{equation}
\Omega = \left(\alpha^2+1\right)^2.
\end{equation}
Finally, we proceed with the bisection search to find the root of $\frac{d\Gamma(\alpha)}{d\alpha}$, which is $\alpha^*$. Note that, $\Delta$ yields that the maximum value of $\alpha^*$ can be $\tan(90^\circ)$. Also, the minimum value of $\alpha^*$ is 0, because $\alpha^*$ is a ratio of positive quantities. The bisection search algorithm with a maximum number of iterations, $N_u$, and tolerance, $\epsilon$, can be summarized in Algorithm~\ref{alg:bisect}.
\begin{algorithm}
\caption{Bisection Search Algorithm}
\label{alg:bisect}
\begin{algorithmic}[1]
\State{$N\gets 0$, $\alpha_1 = 0$, $\alpha_2 = \tan(89.9^\circ)$}
\While{$N \leq N_{u}$}
\State{$\alpha_3 \gets \frac{\alpha_1+\alpha_2}{2}$}
\If {$\left(\frac{d\Gamma(\alpha)}{d\alpha}\Bigr|_{\substack{\alpha = \alpha_3}}\right) = 0 $ \textbf{or} $(\alpha_2 - \alpha_1) \leq \epsilon$}
 \State{$\alpha^* = \alpha_3$}
 \State{\textbf{break}}
\EndIf
\State{$N\gets N+1$}
\If{$\text{sign}\left(\frac{d\Gamma(\alpha)}{d\alpha}\Bigr|_{\substack{\alpha = \alpha_3}}\right)  = \text{sign}\left(\frac{d\Gamma(\alpha)}{d\alpha}\Bigr|_{\substack{\alpha = \alpha_1}}\right) $}
\State{$\alpha_1 = \alpha_3$}
\Else
\State {$\alpha_2 = \alpha_3$}
\EndIf
\EndWhile
\end{algorithmic}
\end{algorithm}

After evaluating $\alpha^*$ using Algorithm~\ref{alg:bisect}, the problem given by~\eqref{eq:fal} becomes MINLP, and can be solved to find $x_D$, $y_D$, $\{u_i\}$, and $R$ by interior point optimizer of MOSEK solver. 

%
\section{Numerical Results}\label{sec:num}
The numerical values of the parameters of the air-to-ground channels for different environments are calculated based on~\cite{al-hourani_optimal_2014} and~\cite{itu_prop}, and presented in Table~\ref{tab:env}. %
\begin{table}[!t]
\caption{RF Propagation Parameters of different environments}
\label{tab:env}
\centering
\begin{tabular}{|c|c|}
\hline
\textbf{Environment} & \textbf{Parameters ($a$, $b$, $\eta_{\text{LoS}}$, $\eta_{\text{NLoS}})$}\\
\hline
Suburban & (4.88, 0.43, 0.1, 21)\\
\hline
Urban & (9.61, 0.16, 1, 20)\\
\hline
Dense Urban & (12.08, 0.11, 1.6, 23)\\
\hline
High-rise Urban & (27.23, 0.08, 2.3, 34)\\
\hline
\end{tabular}
\end{table}
Also, all simulation parameters are provided in Table~\ref{tab:sim}. It is assumed that the drone-cell have enough capacity to serve all the users in the coverage region. The effect of different environments is shown in Fig.~\ref{fig:env} for 25 users by using $\gamma_2$. After finding $\alpha^*$ by using  Algorithm~\ref{alg:bisect}, the solution of the optimization problem in~\eqref{eq:fal} yields $R$, which determines the size of the circular coverage region, and the location of the drone-cell in 2-D space, as shown with an asterisk in the corresponding color for each environment in Fig.~\ref{fig:env}. Note that not only the size of the region, but also the location of the drone-cell changes. Observe that there are users (some of them are pointed by arrows) right on the edge of the coverage region, which means that the altitude is determined efficiently such that there is no area wasted. As expected, the area covered by the suburban environment has the largest size, due to the reduced blockage compared to other environments. On the other hand, the high-rise urban environment has the worst coverage.
\begin{figure}[!t]
\centering
\includegraphics[width=0.45\textwidth, height=0.45\textwidth]{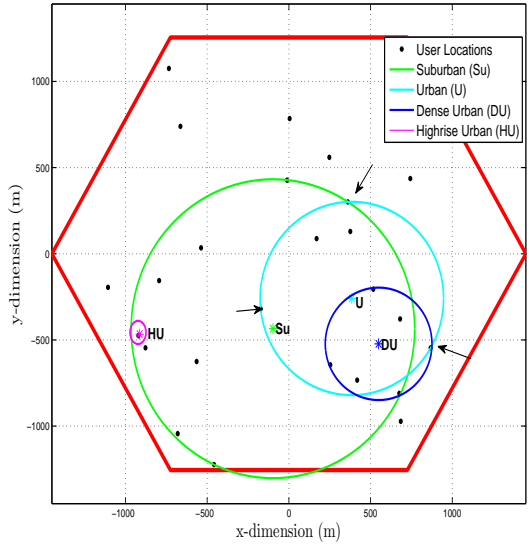}
\caption{Effect of environment on the location and size of the coverage area (circles in the figure) is shown.}
\label{fig:env}
\end{figure}
%
\begin{table}[!t]
\caption{Simulation Parameters}
\label{tab:sim}
\centering
\begin{tabular}{|c|c|}
\hline
\textbf{Parameter} & \textbf{Value}\\
\hline
($x_{l}, x_{u}$) & (-1450, 1450) m\\
\hline
($y_{l}, y_{u}$) & (-1258, 1258) m\\
\hline
($\gamma_1, \gamma_2, \gamma_3$) & (90, 100, 125) dB\\
\hline
$f_c$ & 2.5 GHz\\
\hline
$N_{u}$ & 100\\
\hline
$\epsilon$ & $10^{-5}$\\
\hline
Monte Carlo Runs & 100\\
\hline
\end{tabular}
\end{table}
%
%

To elaborate more on the effect of environment parameters and the performance of the algorithm, we show the average revenue for varying QoS requirements in different environments together with 95$\%$ confidence interval for the revenue in Fig.~\ref{fig:bar}. The results are obtained by using 100 Monte Carlo simulations. In each simulation, 40 users are generated randomly in the cell according to a uniform probability distribution. The results show that the number of served users varies by at most 1 user. Hence, the performance of the proposed method is consistent. The parameter $\gamma_3$ provides the maximum revenue by enabling a coverage area larger than the size of the macrocell, because we are allowing a pathloss of 125 dB for the user to be served. Note that the average revenue for the high-rise urban environment is significantly worse than the other environments for this sparse user distribution. The dramatic drop of revenue can be understood by comparing the parameters of suburban and high-rise urban environments. For instance, $\eta_{\text{NLoS}}$ increases by $13$ dB for the high-rise environment, which alone can reduce the coverage area by more than 100 times. Considering the changes in the other parameters, the significant reduction in the coverage area, and accordingly revenue, is not surprising. However, more users could be covered if the users were in proximity to each other, i.e., clustered. Hence, measuring traffic characteristics in space, such as the amount of clustering as shown in~\cite{cov}, can be of significant importance for determining the efficiency of drone-cell assistance for cellular networks.

\begin{figure}[!t]
\centering
\includegraphics[width=0.45\textwidth]{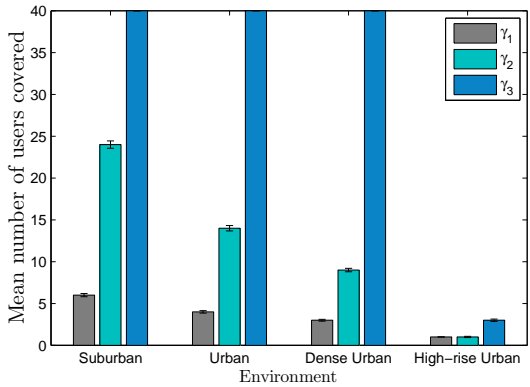}
\caption{Mean number of users covered in different environments with 95$\%$ confidence interval. 40 users are distributed uniformly in one macrocell.}
\label{fig:bar}
\end{figure}

\section{Conclusion}\label{sec:conc}

In this paper, we have studied the 3-D placement problem of a drone-cell. First, we discussed the characteristics of the air-to-ground channel, and observed that they can be captured only by considering both the altitude of the drone-cell, and locations of the drone-cell and the users in the horizontal dimension. This yielded a 3-D placement problem with the objective of maximizing the revenue, which is measured by the maximum number of users covered by the drone-cell. We have formulated an equivalent problem which can be solved efficiently to find the location and size of the coverage region, and the altitude of the drone-cell.

Our model can be used for many possible communication scenarios, including failure and congestion. The results presented here can be used by proper authorities to manage and regulate drone-cells assisting cellular networks to meet high demands of the future wireless cellular networks. The effect of interference, and using several drone-cells are interesting future research directions.

\bibliographystyle{IEEEtran}
\bibliography{Drones_main}

\end{document}